\newcommand{\Li}{\mathrm{Li}\,}
\newcommand{\Ls}{\mathrm{Ls}\,}
\newcommand{\Lim}{\mathrm{Lim}\,}
\newcommand{\gbw}{\mathrm{GBW}}
\newtheorem{theorem}{Theorem}[section]
\newtheorem{lemma}[theorem]{Lemma}
\newtheorem{corollary}[theorem]{Corollary}
\newtheorem{question}[theorem]{Question}
\newtheorem{example}[theorem]{Example}
\theoremstyle{definition}
\begin{document}

\author{Ramiro de la Vega\thanks{Universidad de los Andes, Bogot\'a, Colombia, rade@uniandes.edu.co}}

\title{The generalized Bolzano-Weierstrass property revisited}

\maketitle

\begin{abstract}
We investigate the question of when a topological space $X$ has the \emph{Generalized Bolzano-Weierstrass property}: every sequence of subsets of $X$ has a convergent subsequence (in the sense of Kuratowski).
\end{abstract}

\noindent {\em MSC:} Primary 54A20, 54A25; Secondary 54G20, 54A35.

\noindent {\em Keywords:}  Bolzano-Weierstrass property, Kuratowski convergence, continuum hypothesis.

\section{Introduction}

For a topological space $X$ and a sequence $(K_n)_{n \in \omega}$ of non-empty subsets of $X$ we define the \emph{lower closed limit} of the sequence as the set $$\Li (K_n)_{n \in \omega}=\left\{ x\in X\ \middle \vert \begin{array}{l}	\text{for every neighborhood } U \text{ of }x\\ U\cap K_n\neq \emptyset \text{ for all but finite }n \end{array}\right\}$$ and define the \emph{upper closed limit} as $$\Ls (K_n)_{n \in \omega}=\left\{ x\in X\ \middle \vert \begin{array}{l}	\text{for every neighborhood } U \text{ of }x\\ U\cap K_n\neq \emptyset \text{ for infinitely many }n \end{array}\right\}.$$

Clearly $\Li (K_n)_{n \in \omega} \subseteq \Ls (K_n)_{n \in \omega}$ for any sequence and when equality occurs we say that the sequence \emph{converges to} this common value $\Lim (K_n)_{n \in \omega} = \Li (K_n)_{n \in \omega} = \Ls (K_n)_{n \in \omega}$. 

This type of convergence is often called \emph{Kuratowski convergence} and its main properties for the metric case can be found in \cite[\S29]{Kur} including a proof of the following result (attributed to Zarankiewicz \cite{Zar}) that Kuratowski called \emph{Generalized Bolzano-Weierstrass theorem}:

\begin{theorem}\label{gbwt}
	Every sequence of subsets of a separable metric space contains a convergent subsequence.
\end{theorem}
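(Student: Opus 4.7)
The plan is to exploit second-countability of separable metric spaces (separable metric $\Leftrightarrow$ second countable) together with a diagonal argument, to produce a subsequence whose interaction with each basic open set is stabilized. Fix a countable base $\mathcal{B}=\{B_k:k\in\omega\}$ of $X$ and let $(K_n)_{n\in\omega}$ be the given sequence.

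First I would build, by recursion on $k$, a decreasing chain of infinite sets $\omega=N_0\supseteq N_1\supseteq N_2\supseteq\cdots$ such that for every $k$, either $B_k\cap K_n\neq\emptyset$ for all $n\in N_{k+1}$, or $B_k\cap K_n=\emptyset$ for all $n\in N_{k+1}$. This is immediate: given $N_k$, split it according to whether $B_k$ meets $K_n$ or not, and choose whichever piece is infinite. Then take a diagonal selection $n_0<n_1<\cdots$ with $n_j\in N_j$. By construction, for each $k$ the subsequence $(K_{n_j})_{j\geq k}$ is entirely on one side of the dichotomy for $B_k$, so the basis is partitioned into $\mathcal{B}^+\cup\mathcal{B}^-$ where $B\in\mathcal{B}^+$ iff $B\cap K_{n_j}\neq\emptyset$ for all sufficiently large $j$, and $B\in\mathcal{B}^-$ iff $B\cap K_{n_j}=\emptyset$ for all sufficiently large $j$.

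Next I would check that this subsequence converges. The containment $\Li (K_{n_j})\subseteq \Ls (K_{n_j})$ is automatic. For the reverse, let $x\in \Ls (K_{n_j})$ and let $U$ be any neighborhood of $x$; pick $B_k\in\mathcal{B}$ with $x\in B_k\subseteq U$. Since $x\in\Ls$, $B_k$ meets $K_{n_j}$ for infinitely many $j$, which rules out $B_k\in\mathcal{B}^-$, so $B_k\in\mathcal{B}^+$. Therefore $B_k\cap K_{n_j}\neq\emptyset$ eventually, hence $U\cap K_{n_j}\neq\emptyset$ eventually, so $x\in\Li (K_{n_j})$.

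The proof is almost entirely driven by the countable-basis diagonalization; the only potential snag is remembering to use the basis both when stabilizing the sequence and when verifying the $\Ls\subseteq\Li$ inclusion, since without a basis inside an arbitrary neighborhood the last step would fail. Note that neither the metric nor even the $T_1$ axiom is actually used; the argument goes through for any second countable space, a point worth noting for the later generalizations.
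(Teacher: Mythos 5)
Your proof is correct, but it is not the route the paper takes: the paper does not prove Theorem \ref{gbwt} directly at all, instead deriving it (via Corollary \ref{gbw for metric}) from the stronger Theorem \ref{booth}, which replaces ``countable base'' by ``weight less than $\mathfrak{s}$.'' The paper's argument is the contrapositive of yours: given a sequence with no convergent subsequence and a base $\mathcal{B}$, it shows that the sets $A_U=\{n: K_n\cap U=\emptyset\}$ for $U\in\mathcal{B}$ form a splitting family, whence $w(X)\geq\mathfrak{s}$. Your recursion is exactly the dual move in the countable case: at stage $k$ you pick an infinite subset of $N_k$ that is \emph{not} split by $A_{B_k}$, and the diagonalization succeeds precisely because a countable family cannot be splitting (i.e.\ $\mathfrak{s}>\aleph_0$). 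What your direct approach buys is elementarity --- no cardinal invariants are needed, and it is essentially the classical Zarankiewicz--Kuratowski argument the paper cites; what the paper's approach buys is the sharper bound $\mathfrak{s}$, which is the engine of everything that follows (Corollary \ref{gbw for metric}, Examples \ref{sorgenfrey} and \ref{right}). Your closing observation that neither the metric nor $T_1$ is used, only a countable base, is exactly the content of Theorem \ref{booth} specialized to $w(X)=\aleph_0$. The only blemish is a harmless off-by-one: the dichotomy for $B_k$ is decided on $N_{k+1}$, so it is $(K_{n_j})_{j\geq k+1}$, not $(K_{n_j})_{j\geq k}$, that lies on one side of it; since you only ever use ``for all sufficiently large $j$,'' nothing is affected.
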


This same theorem was called \emph{Selection Theorem} by Hausdorff (see \cite[\S28]{Hau}). In \cite{Sie}, Sierpi\'nski proves a converse to this theorem under the continuum hypothesis:

\begin{theorem}[Assume $2^{\aleph_0}=\aleph_1$]\label{sierpinski}
	A metric space in which every sequence of sets has a convergent subsequence is separable.
\end{theorem}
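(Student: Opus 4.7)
The plan is to prove the contrapositive: starting from a non-separable metric space $X$, I will build a sequence of non-empty subsets with no convergent subsequence. The first routine step is to extract an uncountable $\epsilon$-discrete set $D\subseteq X$ for some $\epsilon>0$: otherwise, for each $n$ a maximal $(1/n)$-discrete set would be countable, and the union of such sets would be a countable dense subset of $X$. I will enumerate a size-$\aleph_1$ subset of $D$ as $\{x_\alpha : \alpha<\omega_1\}$, keeping $x_0$ aside as a ``base point'' that will later ensure non-emptiness of all the $K_n$.

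The key step, and the only one that invokes CH, is to index an enumeration of $\mathcal{P}(\omega)$ by the same ordinal. Under CH I enumerate the infinite subsets of $\omega$ as $\{A_\alpha : 1\le\alpha<\omega_1\}$, and for each $\alpha\ge 1$ I fix a splitting $S_\alpha\subseteq A_\alpha$ with both $S_\alpha$ and $A_\alpha\setminus S_\alpha$ infinite (e.g.\ take the elements of $A_\alpha$ of even enumeration rank). I then define
$$K_n := \{x_0\}\cup\{x_\alpha : 1\le\alpha<\omega_1,\ n\in S_\alpha\}\subseteq D,$$
so that each $K_n$ is non-empty thanks to $x_0$.

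To verify that no subsequence of $(K_n)_{n\in\omega}$ converges, I take an arbitrary infinite $A\subseteq\omega$, write $A=A_\alpha$ with $\alpha\ge 1$, and use $x_\alpha$ as a witness of divergence. The open ball $U := B(x_\alpha,\epsilon/2)$ meets $D$ only at $x_\alpha$ (since $D$ is $\epsilon$-discrete), and in particular does not contain $x_0$. Hence $U\cap K_n = \{x_\alpha\}$ precisely when $n\in S_\alpha$, and $U\cap K_n = \emptyset$ when $n\in A_\alpha\setminus S_\alpha$; both behaviours happen infinitely often along $A$, placing $x_\alpha$ in $\Ls (K_n)_{n\in A}\setminus \Li (K_n)_{n\in A}$.

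The main obstacle, and the sole role played by CH, is the cardinality matching in the second step: I need one diagonalization coordinate per infinite subset of $\omega$, and CH furnishes a bijection between $\mathcal{P}(\omega)$ and an uncountable discrete index set sitting inside $X$. Without CH, an uncountable $\epsilon$-discrete set might have cardinality strictly less than $2^{\aleph_0}$, so this direct, one-coordinate-per-subset diagonalization breaks down. Everything else --- choosing the splittings $S_\alpha$, assembling the $K_n$, and decoding divergence through the isolated-in-$D$ behaviour of $x_\alpha$ --- is mechanical once the index matching is in place.
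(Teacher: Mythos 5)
Your proof is correct and is essentially the paper's own route to this theorem: the paper derives Theorem \ref{sierpinski} from Theorem \ref{discrete}, whose proof maps a splitting family injectively into a discrete subspace and sets $K_n=\{\varphi(S): n\in S\}$, which is exactly your construction with the splitting family $\{S_\alpha : 1\le\alpha<\omega_1\}$ extracted from a CH-enumeration of $[\omega]^\omega$ and $\varphi(S_\alpha)=x_\alpha$. The only difference is packaging: the paper abstracts the CH step into the inequality $\mathfrak{s}\leq 2^{\aleph_0}=\aleph_1$ and the standard fact that a non-separable metric space has a discrete subspace of size $d(X)$, which is precisely your $\epsilon$-discrete set argument.
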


The purpose of this note is to examine the role of CH in Sierpi\'nski's theorem and, more importantly, to investigate the generalized Bolzano-Weierstrass property in the context of general topological spaces.

\section{Relation with the splitting number $\mathfrak{s}$}

Remember that an $\mathcal{S}\subseteq [\omega]^\omega$ is called a \emph{splitting family} if for any $A \in [\omega]^\omega$ there is an $S \in \mathcal{S}$ such that $A \cap S$ and $A \setminus S$ are both infinite. The \emph{splitting number} $\mathfrak{s}$ is defined as the smallest size of a splitting family. The splitting number is attributed to Booth who used it in \cite{Boo} to study sequential compactness. Using that the Cantor cube $2^\kappa$ is sequentially compact for $\kappa < \mathfrak{s}$, Booth proved the following theorem for which we include a more direct proof.

\begin{theorem}\label{booth}
	If $X$ is a topological space such that $w(X)<\mathfrak{s}$, then
	every sequence of subsets of $X$ has a convergent subsequence.
\end{theorem}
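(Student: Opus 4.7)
The plan is to use a base $\mathcal{B}$ of minimal cardinality for $X$ in order to code the convergence behavior of the sequence $(K_n)_{n\in\omega}$ into a family of subsets of $\omega$ of size strictly less than $\mathfrak{s}$, and then invoke the definition of $\mathfrak{s}$ to extract an appropriate subsequence.

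First I would fix a base $\mathcal{B}$ for $X$ with $|\mathcal{B}|=w(X)<\mathfrak{s}$, and for each $B\in\mathcal{B}$ set
$$A_B=\{n\in\omega : B\cap K_n\neq\emptyset\}.$$
The family $\{A_B : B\in\mathcal{B}\}$ has cardinality less than $\mathfrak{s}$, so by the very definition of the splitting number it cannot be a splitting family. Hence there exists an infinite $I\subseteq\omega$ such that for every $B\in\mathcal{B}$, one of $I\cap A_B$ or $I\setminus A_B$ is finite. This $I$ is the index set of the subsequence I want.

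The main claim is that $(K_n)_{n\in I}$ Kuratowski-converges. Since the inclusion $\Li(K_n)_{n\in I}\subseteq \Ls(K_n)_{n\in I}$ is automatic, the content is to prove the reverse inclusion. Given $x\in\Ls(K_n)_{n\in I}$ and a neighborhood $U$ of $x$, I would pick $B\in\mathcal{B}$ with $x\in B\subseteq U$. Membership of $x$ in the upper closed limit forces $B\cap K_n\neq\emptyset$ for infinitely many $n\in I$, i.e.\ $I\cap A_B$ is infinite; by the defining property of $I$, this then forces $I\setminus A_B$ to be finite. Consequently $B\cap K_n\neq\emptyset$, and a fortiori $U\cap K_n\neq\emptyset$, for all but finitely many $n\in I$, putting $x\in\Li(K_n)_{n\in I}$.

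I do not expect any serious obstacle: the argument is essentially a direct translation between the combinatorial dichotomy "$I\cap A_B$ finite or $I\setminus A_B$ finite" and the topological dichotomy "$B$ meets cofinitely many, or only finitely many, of the $K_n$ with $n\in I$". The only point requiring any care is the reliance on a \emph{base} rather than an arbitrary network, which is needed so that every neighborhood of $x$ contains some $B\in\mathcal{B}$ with $x\in B$; this is exactly what makes the weight $w(X)$ the right cardinal invariant to compare with $\mathfrak{s}$.
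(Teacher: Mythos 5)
Your proof is correct and is essentially the paper's argument run in the contrapositive direction: the paper shows that a sequence with no convergent subsequence forces the sets $\{n : K_n\cap U=\emptyset\}$ ($U$ in a base) to form a splitting family, whereas you observe that a base of size $<\mathfrak{s}$ cannot code a splitting family and extract the unsplit set $I$ directly. The combinatorial correspondence between ``$B$ splits $I$'' and ``$B$ witnesses $\Ls\neq\Li$ along $I$'' is identical in both versions.
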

\begin{proof}
	Suppose that there is a sequence $(K_n)_{n \in \omega}$ of
	subsets of $X$ without a convergent subsequence. We want to show
	that $|w(X)|\geq \mathfrak{s}$. For this lets fix a base $\mathcal{B}$ for the topology of $X$ and for each $U \in \mathcal{B}$ let
	$A_U= \{n \in \omega : K_n \cap U=\emptyset\}$. It is enough to
	show that $\{A_U : U \in \mathcal{B}\}$ is a splitting family of
	subsets of $\omega$. Given $A \in [\omega]^\omega$ we know
	that the subsequence $(K_n)_{n \in A}$ does not converge and
	therefore there is a $p \in \Ls(K_n)_{n \in A} \setminus
	\Li(K_n)_{n \in A}$. Since $p \notin \Li(K_n)_{n \in A}$, there
	exists $U \in \mathcal{B}$, a neighborhood of $p$, such that $A_U \cap A$ is infinite.
	Since $p \in \Ls(K_n)_{n \in A}$, the set $A \setminus A_U$ must also be
	infinite, and therefore $A_U$ splits $A$.
\end{proof}

As an immediate consequence, since the weight of a metrizable space is equal to its density, we obtain an improvement of Theorem \ref{gbwt}:

\begin{corollary}\label{gbw for metric}
	If $X$ is a metrizable space such that $d(X)<\mathfrak{s}$, then every
	sequence of subsets of $X$ has a convergent subsequence.
\end{corollary}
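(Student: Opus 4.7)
The plan is a direct reduction to Theorem \ref{booth} via the classical identity $w(X)=d(X)$ for metrizable spaces. I would first recall that the inequality $d(X)\le w(X)$ holds for any topological space (pick one point from each nonempty member of a base of minimum cardinality). For the reverse inequality, which uses metrizability, I would fix a compatible metric on $X$ and a dense subset $D\subseteq X$ with $|D|=d(X)$, and verify that the collection $\{B(x,1/n):x\in D,\ 1\le n<\omega\}$ is a base for the topology. This yields $w(X)\le d(X)\cdot\aleph_0$, which equals $d(X)$ whenever $d(X)$ is infinite.

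Once the identity $w(X)=d(X)$ is in hand, the hypothesis $d(X)<\mathfrak{s}$ becomes $w(X)<\mathfrak{s}$, and Theorem \ref{booth} applies verbatim to produce a convergent subsequence of any given sequence of subsets of $X$. There is essentially no obstacle; the only point worth flagging is the degenerate case in which $d(X)$ is finite. Since metrizable spaces are $T_1$, a finite dense set forces $X$ itself to be finite, so $w(X)$ is finite as well and the argument goes through; in any event $\mathfrak{s}\ge\aleph_1$, so a finite cardinal is trivially below $\mathfrak{s}$.
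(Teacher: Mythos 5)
Your proposal is correct and follows exactly the paper's route: the corollary is derived from Theorem \ref{booth} by invoking the classical identity $w(X)=d(X)$ for metrizable spaces, which is all the paper says. Your extra details (the ball base $\{B(x,1/n)\}$ and the finite-density degenerate case) are fine but not needed beyond citing the standard fact.
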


This already tells us that we cannot simply remove $CH$ from Theorem \ref{sierpinski} since the discrete space of size $\aleph_1$ would be a consistent counterexample. On the other hand, we can modify Sierpi\'nski's original argument in \cite{Sie} to obtain the following

\begin{theorem}\label{discrete} Suppose that $X$ is a topological space which has a discrete subspace $D$ with $|D|\geq \mathfrak{s}$. Then there is a sequence $(K_n)_{n\in\omega}$ of subsets of $X$ with no convergent subsequence.
\end{theorem}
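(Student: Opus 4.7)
The plan is to mirror the argument in the proof of Theorem \ref{booth} above, running it ``in reverse'': there, a non-convergent sequence produced a splitting family indexed by a base; here, a splitting family will produce a non-convergent sequence indexed by a discrete set. Concretely, fix an injective enumeration $\{d_\alpha : \alpha < \mathfrak{s}\} \subseteq D$ (possible since $|D| \geq \mathfrak{s}$) and a splitting family $\{S_\alpha : \alpha < \mathfrak{s}\}$ of size $\mathfrak{s}$. For each $n \in \omega$ define
$$K_n = \{d_\alpha : n \in S_\alpha\} \subseteq D.$$
Because $D$ is discrete in $X$, for each $\alpha$ we may fix an open set $V_\alpha \subseteq X$ with $V_\alpha \cap D = \{d_\alpha\}$.

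The key observation is that $K_n \subseteq D$, so for any $U \subseteq V_\alpha$ that is a neighborhood of $d_\alpha$ we have $U \cap K_n = \{d_\alpha\} \cap K_n$, which is non-empty exactly when $n \in S_\alpha$. From this one reads off directly:
\begin{itemize}
\item $d_\alpha \in \Ls(K_n)_{n\in A}$ iff every neighborhood of $d_\alpha$ meets $K_n$ for infinitely many $n \in A$, and by shrinking inside $V_\alpha$ this reduces to ``$A \cap S_\alpha$ is infinite'';
\item $d_\alpha \in \Li(K_n)_{n\in A}$ forces $V_\alpha \cap K_n \neq \emptyset$ for cofinitely many $n \in A$, i.e.\ $A \setminus S_\alpha$ is finite.
\end{itemize}

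Now given any $A \in [\omega]^\omega$, the splitting property of $\{S_\alpha\}$ yields some $\alpha$ with both $A \cap S_\alpha$ and $A \setminus S_\alpha$ infinite. By the two bullet points, $d_\alpha \in \Ls(K_n)_{n \in A} \setminus \Li(K_n)_{n \in A}$, so the subsequence $(K_n)_{n \in A}$ fails to converge. Since $A$ was arbitrary, $(K_n)_{n \in \omega}$ has no convergent subsequence.

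I do not anticipate a serious obstacle: the only subtle step is verifying the $\Ls$/$\Li$ characterizations for \emph{arbitrary} neighborhoods of $d_\alpha$, not just neighborhoods contained in $V_\alpha$. This is handled cleanly by the fact that $K_n \subseteq D$, so intersecting an arbitrary neighborhood $U$ of $d_\alpha$ with $V_\alpha$ loses nothing relevant to testing $U \cap K_n$. Everything else is bookkeeping transferred from Theorem \ref{booth}.
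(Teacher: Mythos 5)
Your proposal is correct and is essentially the paper's own argument: the paper likewise fixes an injection $\varphi$ from a splitting family $\mathcal{S}$ into $D$, sets $K_n=\{\varphi(S): n\in S\}$ so that $\varphi(S)\in K_n \iff n\in S$, and for a given infinite $A$ picks $S$ splitting $A$ to place $\varphi(S)$ in $\Ls(K_n)_{n\in A}\setminus \Li(K_n)_{n\in A}$ via the isolating neighborhood of $\varphi(S)$ in $D$. Your indexing by ordinals versus the paper's map $\varphi$ is only a notational difference, and your handling of arbitrary neighborhoods matches the paper's.
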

\begin{proof}
	By assumption, there is an inyective function $\varphi:\mathcal{S}
	\to D$ with $\mathcal{S}$ a splitting family of subsets of
	$\omega$. For each $n \in \omega$ we define a subset $K_n$ of $D$
	by $K_n=\{\varphi(S) : S \in \mathcal{S} \text{ and } n \in S\}$.
	Note that the inyectivity of $\varphi$ ensures that for all $n \in
	\omega$ and all $S \in \mathcal{S}$ we have: $\varphi(S) \in K_n
	\iff n \in S$.
	
	To see that $(K_n)_{n\in\omega}$ has no convergent subsequence,
	fix an infinite $A \subseteq \omega$. Let $S \in \mathcal{S}$ be
	such that $A \cap S$ and $A \setminus S$ are both infinite and let
	$p=\varphi(S)$. Now it is enough to show that $p \in \Ls (K_n)_{n \in A} \setminus \Li (K_n)_{n \in A}$. Since $D$ is discrete,
	there is an open $U \subseteq X$ such that $U \cap D=\{p \}$.
	Thus, for any $n \in A \setminus S$ we have that $U \cap
	K_n=\emptyset$ and therefore $p \notin \Li(K_n)_{n \in A}$. On
	the other hand, given any neighborhood $V$ of $p$ we have that $p
	\in V \cap K_n$ (and hence $V \cap K_n \neq \emptyset$) for every
	$n \in A \cap S$, so that $p \in \Ls (K_n)_{n \in A}$.
\end{proof}

From the fact that any metrizable space $X$ contains a discrete subset of size $d(X)$ (see for example \cite[Theorem 8.1]{Hod}), we now get an improvement of Theorem \ref{sierpinski}.

\begin{corollary} If $X$ is a metrizable space in which every sequence of subsets has a convergent subsequence, then $d(X)<\mathfrak{s}$.
\end{corollary}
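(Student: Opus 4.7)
The plan is to prove the contrapositive and invoke Theorem \ref{discrete} directly. Suppose $X$ is a metrizable space with $d(X) \geq \mathfrak{s}$; I want to produce a sequence of subsets of $X$ with no convergent subsequence.

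First I would apply the quoted result from \cite[Theorem 8.1]{Hod}: every metrizable space $X$ contains a discrete subspace $D$ with $|D| = d(X)$. In our situation this gives a discrete $D \subseteq X$ with $|D| \geq \mathfrak{s}$. Once such a $D$ is in hand, the hypothesis of Theorem \ref{discrete} is immediately satisfied, so that theorem yields a sequence $(K_n)_{n\in\omega}$ of subsets of $X$ with no convergent subsequence. This contradicts the assumption on $X$, which completes the contrapositive.

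There is no real obstacle here; the entire content is the combination of Theorem \ref{discrete} with the structural fact that in a metrizable space the density and the supremum of sizes of discrete subspaces agree. The only thing to verify is that the discrete subspace provided by \cite[Theorem 8.1]{Hod} really has cardinality $d(X)$ (not merely strictly below, or merely some cardinal $\geq\mathfrak{s}$), but since we only need a discrete subset of size at least $\mathfrak{s}$, even a weaker version of that structural lemma would suffice.
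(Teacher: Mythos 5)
Your proof is correct and is essentially identical to the paper's argument: the paper also derives this corollary by combining Theorem \ref{discrete} with the fact (citing \cite[Theorem 8.1]{Hod}) that a metrizable space contains a discrete subspace of size $d(X)$. Your added remark that only a discrete subset of size at least $\mathfrak{s}$ is needed is a fair observation but does not change the route.
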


Note that discrete spaces (and their one-point compactifications) show that the number $\mathfrak{s}$ is optimal in both Theorem \ref{booth} and Theorem \ref{discrete} (even for compact Hausdorff spaces).

\section{Two well known spaces}

Let us say that a space $X$ has the \emph{generalized Bolzano-Weierstrass property} or that $X \in \gbw$ if every sequence of subsets of $X$ has a convergent subsequence. In the previous section we showed that $X \in \gbw \Leftrightarrow d(X)<\mathfrak{s}$ for metrizable spaces. We now turn to the natural question of whether either of the theorems \ref{booth} or \ref{discrete} characterize $\gbw$ for general topological spaces.

Our first example shows that the converse of Theorem \ref{booth} is not true, since the Sorgenfrey line has weight $2^{\aleph_0} \geq \mathfrak{s}$. It also shows that $\gbw$ is not closed under products since the square of the Sorgenfrey line (a.k.a. The Sorgenfrey plane) contains a discrete subset of size continuum and hence, by Theorem \ref{discrete}, it is not in $\gbw$.

\begin{example}\label{sorgenfrey}
	The Sorgenfrey line belongs to $\gbw$.
\end{example}

\begin{proof}
	Fix a sequence $(K_n)_{n \in \omega}$ of subsets of	$\mathbb{R}$. By Theorem \ref{booth} we may assume that $(K_n)_{n \in \omega}$ converges (say to $K$) in the usual topology of $\mathbb{R}$. Lets momentarily say that a point $x \in K$ and a subset $A \subseteq \omega$ form a \emph{bad pair} $(x,A)$ if there is an $r>x$ such that the set $\{n \in A : [x,r) \cap
	K_n=\emptyset \}$ is infinite. Note that if $(x,A)$ is a bad pair and $A \subseteq^* B$ then $(x,B)$ is also a bad pair. Also note that if $(x,\omega)$ is a bad pair and $r>x$ witnesses this fact, then $(x,r) \cap K=\emptyset$ since $(K_n)_{n \in \omega}$ converges to $K$ with respect to the usual topology of $\mathbb{R}$. It follows that there are only countably many $x \in K$ which can be part of a bad pair, say $\{x_i\}_{i\in \omega}$. 
	
	Now we define inductively a sequence $\{A_i\}_{i\in \omega}$ of subsets of $\omega$: let $A_0=\omega$ and suppose we already defined $A_i$. If $(x_i,A_i)$	is a bad pair we choose $r_i>x_i$ such that $A_{i+1}:=\{n \in A_i	: [x_i,r_i) \cap K_n=\emptyset \}$ is infinite. Otherwise we let $A_{i+1}=A_i$. Finally let $A$ be an infinite subset of $\omega$ with $A \subseteq^* A_i$ for all $i \in \omega$.
	
	We show now that the subsequence $(K_n)_{n \in A}$ is convergent in	the Sorgenfrey topology. Note that $\Ls(K_n)_{n \in A} \subseteq K$ since the Sorgenfrey topology is finner than the usual topology. If $x \in K \setminus	\Li (K_n)_{n \in A}$, then $(x,A)$ is a bad pair and therefore $x=x_i$ for some $i \in \omega$. Since $A \subseteq^* A_i$ we have that $(x_i,A_i)$ is also a bad pair and by construction $r_i$ is defined and $[x_i,r_i) \cap K_n=\emptyset$ for all $n \in A_{i+1}$ (and hence for almost all $n \in A$), so $x \notin \Ls(K_n)_{n \in	A}$.
\end{proof}

Answering the question for Theorem \ref{discrete} seems to be a bit more subtle. Before giving our second example lets make a simple observation.

\begin{lemma}
	If $X\in \gbw$ is countably compact then $X$ is sequentially compact.
\end{lemma}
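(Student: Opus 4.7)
The natural plan is to convert a sequence of points into a sequence of sets by taking singletons, apply $\gbw$ to extract a Kuratowski-convergent subsequence, and then use countable compactness to produce a true limit point from the upper limit.

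More concretely, given a sequence $(x_n)_{n\in\omega}$ in $X$, I would set $K_n=\{x_n\}$ and use the hypothesis $X\in\gbw$ to get an infinite $A\subseteq\omega$ such that $(K_n)_{n\in A}$ converges in the Kuratowski sense to some set $K$, i.e.\ $\Li(K_n)_{n\in A}=\Ls(K_n)_{n\in A}=K$. Since $X$ is countably compact, the sequence $(x_n)_{n\in A}$ has an accumulation point $p\in X$; by the definition of accumulation point, every neighborhood of $p$ meets $\{x_n\}=K_n$ for infinitely many $n\in A$, so by definition $p\in\Ls(K_n)_{n\in A}$. By Kuratowski convergence, $p\in\Li(K_n)_{n\in A}$, which means that every neighborhood of $p$ meets $K_n$ for all but finitely many $n\in A$, i.e.\ $x_n\to p$ along $A$. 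This exhibits a convergent subsequence of $(x_n)$.

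There is essentially no obstacle here; the argument is almost a tautology once one realizes that the $\Li$/$\Ls$ dichotomy for singletons is exactly the gap between ``accumulation point'' and ``limit of a subsequence'', and that countable compactness closes that gap. The only small point worth checking is that the accumulation point of a subsequence indexed by $A$ genuinely lies in $\Ls(K_n)_{n\in A}$ (rather than only in $\Ls(K_n)_{n\in\omega}$), which follows directly from taking $A$ as the index set when applying the countable-compactness characterization.
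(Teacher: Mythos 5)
Your proof is correct and follows essentially the same route as the paper: take singletons, extract a Kuratowski-convergent subsequence via $\gbw$, use countable compactness to see that the upper limit is non-empty, and conclude that any point of the (equal) lower limit is a limit of the subsequence. No gaps.
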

\begin{proof}
	Fix a sequence $(x_n)_{n\in \omega}$ in $X$. Since $X \in \gbw$, the sequence $(\{x_n\})_{n\in \omega}$ has a convergent subsequence, say $(\{x_n\})_{n\in A}$. Countable compactness gives us that $\Ls(\{x_n\})_{n\in A} \neq \emptyset$ and therefore $\Li(\{x_n\})_{n\in A}\neq \emptyset$. Now for any $x \in \Li(\{x_n\})_{n\in A}$ we have that  $(x_n)_{n\in A}$ converges to $x$ (of course, if $X$ is also Hausdorff, there would be only one such $x$).
\end{proof}

Our second example is only a consistent one. It was constructed by Fedor\v{c}uk in \cite{Fed} under Jensen´s $\diamondsuit$ and it is a hereditarily separable, compact Hausdorff space with no non-trivial convergent sequences. This example shows that the converse of Theorem \ref{discrete} is not a theorem of ZFC since a discrete subset of a hereditarily separable space must be countable.

\begin{example}[Assume $\diamondsuit$]\label{fedorchuk}
	The Fedor\v{c}uk´s space does not belong to $\gbw$.
\end{example}
\begin{proof}
	Being an infinite space with no non-trivial convergent sequences, Fedor\v{c}uk space is not sequentially compact. Since it is compact, it cannot be in $\gbw$ by the previous lemma.
\end{proof}

We still don´t know the answer to the following

\begin{question}\label{Qds}
	Is it consistent with ZFC that any topological space $X \not\in \gbw$ contains a discrete subspace of size $\mathfrak{s}$?
\end{question}

\section{Relation with the tower number $\mathfrak{t}$}

Remember that a $\mathcal{T}\subseteq [\omega]^\omega$ is called a \emph{tower} if it is well-ordered by $\supseteq^*$ and there is no $A\in [\omega]^\omega$ which is almost contained in every member of $\mathcal{T}$. The tower number $\mathfrak{t}$ is defined as the smallest size of a tower. The \emph{hereditary Lindel\"{o}f degree} of a space $X$ is the least cardinal $\kappa$ such that any open cover of a subspace of $X$ has a subcover of size at most $\kappa$. It is well known (see \cite{Haj}) that $hL(X)$ is also the supremum of all $\kappa$ for which there is in $X$ a right-separated sequence of length $\kappa$ (i.e. a sequence $(x_\alpha)_{\alpha \in \kappa}$ such that $x_\alpha \not\in \overline{\{x_\beta: \beta > \alpha\}}$ for all $\alpha \in \kappa$).

Since for any space $X$ we have $hL(X) \leq w(X)$ and since $\mathfrak{t}\leq\mathfrak{s}$, the following result is not comparable with Theorem \ref{booth}.

\begin{theorem}\label{lindelof} If $X$ is a topological space such that $hL(X)<\mathfrak{t}$, then every sequence of subsets of $X$ has a convergent subsequence.
\end{theorem}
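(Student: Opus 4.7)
My plan is a transfinite construction, building a $\subseteq^*$-decreasing chain $(A_\alpha)_{\alpha<\mathfrak{t}}$ of infinite subsets of $\omega$, which uses the definition of $\mathfrak{t}$ at limit steps and stops as soon as $(K_n)_{n \in A_\alpha}$ converges. I expect it to terminate at some $\alpha_0 < \mathfrak{t}$ because the points witnessing non-termination will assemble into a right-separated sequence in $X$, whose length is bounded by $hL(X)<\mathfrak{t}$.

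Concretely, set $A_0=\omega$. Given $A_\alpha$, if $\Ls (K_n)_{n\in A_\alpha}=\Li (K_n)_{n\in A_\alpha}$ we stop and output $A_\alpha$. Otherwise pick $x_\alpha \in \Ls (K_n)_{n\in A_\alpha}\setminus \Li (K_n)_{n\in A_\alpha}$ together with an open neighborhood $U_\alpha$ of $x_\alpha$ such that $B_\alpha:=\{n\in A_\alpha: K_n\cap U_\alpha=\emptyset\}$ is infinite; such a $U_\alpha$ exists precisely because $x_\alpha \notin \Li (K_n)_{n\in A_\alpha}$. Set $A_{\alpha+1}=B_\alpha$, which is infinite. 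At a limit $\alpha<\mathfrak{t}$, the fact that $(A_\beta)_{\beta<\alpha}$ is a $\supseteq^*$-chain of cardinality strictly less than $\mathfrak{t}$ provides, by the definition of $\mathfrak{t}$, an infinite $A_\alpha\subseteq^* A_\beta$ for every $\beta<\alpha$.

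The main point is to verify that $(x_\alpha)$ is right-separated, with $U_\alpha$ serving as the separating neighborhood at stage $\alpha$. For $\alpha<\beta$ I claim $x_\beta\notin U_\alpha$: otherwise $U_\alpha$ is a neighborhood of $x_\beta$, and $x_\beta\in \Ls (K_n)_{n\in A_\beta}$ would force $U_\alpha\cap K_n\neq\emptyset$ for infinitely many $n\in A_\beta$. On the other hand $A_\beta\subseteq^* A_{\alpha+1}$ by the $\subseteq^*$-decreasing character of the chain, and $A_{\alpha+1}$ was built so that $K_n\cap U_\alpha=\emptyset$ for every $n\in A_{\alpha+1}$; hence $U_\alpha$ misses $K_n$ for all but finitely many $n \in A_\beta$. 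This contradiction shows $x_\alpha\notin \overline{\{x_\beta:\beta>\alpha\}}$.

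If the construction continued through all $\alpha<\mathfrak{t}$ without halting, $(x_\alpha)_{\alpha<\mathfrak{t}}$ would be a right-separated sequence of length $\mathfrak{t}$ in $X$, contradicting $hL(X)<\mathfrak{t}$ by the Hajnal characterization recalled just before the theorem. Hence the construction halts at some $\alpha_0<\mathfrak{t}$ and yields an $A_{\alpha_0}\in [\omega]^\omega$ along which $(K_n)$ Kuratowski-converges. The delicate step I anticipate is the right-separation argument, specifically verifying that the neighborhood $U_\alpha$ chosen at stage $\alpha$ continues to exclude all later $x_\beta$; this works only because the chain condition $A_\beta\subseteq^* A_{\alpha+1}$ persists transitively through limit stages, which is immediate from transitivity of $\subseteq^*$.
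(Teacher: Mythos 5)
Your proposal is correct and follows essentially the same argument as the paper: the same transfinite construction of a $\subseteq^*$-decreasing chain using the definition of $\mathfrak{t}$ at limits, and the same right-separation argument via the neighborhoods $U_\alpha$ (the paper merely phrases it as a proof by contradiction running through all of $\mathfrak{t}$, whereas you stop at the first convergent $A_{\alpha_0}$). No gaps.
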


\begin{proof}
	Suppose there is a sequence $(K_n)_{n \in \omega}$ of subsets of
	$X$ without a convergent subsequence. We will construct inductively three
	sequences $(A_\alpha)_{\alpha < \mathfrak{t}}$,	$(p_\alpha)_{\alpha < \mathfrak{t}}$ and $(U_\alpha)_{\alpha <
		\mathfrak{t}}$ with the following properties:
	
	\begin{itemize}
		
		\item[i)] $(A_\alpha)_{\alpha < \mathfrak{t}}$ is a $\subseteq^*$-descending chain of infinite subsets of $\omega$,
		
		\item[ii)] $p_\alpha \in \Ls (K_n)_{n \in A_\alpha}$,
		
		\item[iii)] $U_\alpha$ is an open neighborhood of $p_\alpha$ and $A_{\alpha+1}=\{n \in A_\alpha: U_\alpha \cap K_n=\emptyset \}$.
		
	\end{itemize}
	
	We let $A_0= \omega$. If we already defined $A_\alpha$, the fact that $(K_n)_{n \in A_\alpha}$ does not converge means that there	is a $p_\alpha \in \Ls (K_n)_{n \in A_\alpha} \setminus \Li (K_n)_{n \in A_\alpha}$. Hence there is an open neighborhood	$U_\alpha$ of $p_\alpha$ for which the set $\{n \in A_\alpha : U_\alpha \cap K_n=\emptyset \}$ is infinite and we let	$A_{\alpha+1}$ be this set. For limit ordinals $\gamma < \mathfrak{t}$, since $(A_\alpha)_{\alpha < \gamma}$ is not a tower, we can find $A_\gamma \in [\omega]^\omega$ with $A_\gamma \subseteq^* A_\alpha$ for all $\alpha < \gamma$.
	
	Note that if $\beta>\alpha$ then since $A_\beta \subseteq^*	A_{\alpha+1}$ we have that $p_\beta \in \Ls (K_n)_{n \in A_{\alpha+1}}$ and therefore $p_\beta \notin U_\alpha$. This shows that $(p_\alpha)_{\alpha < \mathfrak{t}}$ is a right-separated	sequence in $X$ and therefore $hL(X) \geq \mathfrak{t}$.

\end{proof}

Note that this gives us an alternative proof of the fact that the Sorgenfrey line has the generalized Bolzano-Weierstrass property (Example \ref{sorgenfrey}), since it is in fact hereditarily Lindel\"{o}f.

The previous theorem can also help us to shed some light on Question \ref{Qds}. Suppose that $X$ is a regular Hausdorff space with $X \notin \gbw$. Then $X$ is not hereditarily Lindel\"{o}f so it contains a right separated subspace $Y$ of type $\omega_1$. But then either $Y$ is an $S$-space or else it contains an uncountable discrete subspace (see \cite{Roi} for a proof of this fact and more information on $S$-spaces). Since it is consistent that there are no $S$-spaces (see \cite{Tod}), we obtain a very partial answer to Question \ref{Qds}:

\begin{theorem}
		It is consistent with ZFC that any $T_3$ topological space $X \not\in \gbw$ contains a discrete subspace of size $\aleph_1$.
\end{theorem}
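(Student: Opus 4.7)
The plan is to work inside a model of $\mathrm{ZFC}$ in which no regular $S$-space exists; Todor\v{c}evi\'c's consistency result \cite{Tod} (available for instance under $\mathrm{PFA}$) provides such a model. In that model I will argue that every $T_3$ space $X \notin \gbw$ contains a discrete subspace of size $\aleph_1$.

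The first step is Theorem \ref{lindelof} in contrapositive form: from $X \notin \gbw$ we obtain $hL(X) \geq \mathfrak{t} \geq \aleph_1$, and by the characterization of $hL$ via right-separated sequences recalled at the beginning of the section, $X$ contains a right-separated sequence $(x_\alpha)_{\alpha < \omega_1}$. I would then set $Y = \{x_\alpha : \alpha < \omega_1\}$ with the subspace topology; since $T_3$ is hereditary, $Y$ is a regular space of cardinality $\aleph_1$, and it remains right-separated in the inherited enumeration. At this point I would invoke the standard dichotomy from $S$-space theory documented in \cite{Roi}: a regular space carrying a right-separated $\omega_1$-sequence is either an $S$-space or contains an uncountable discrete subspace. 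Because the ambient model rules out the first alternative, $Y$ must contain an uncountable discrete subspace $D$, and $D$ is then a discrete subspace of $X$ of the required size.

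The main point of care is bibliographic rather than technical: one must check that the $S$-space dichotomy is cited in exactly the form used here (applied to the right-separated subspace $Y$ itself, with no hidden hereditary-separability hypothesis), and that Todor\v{c}evi\'c's consistency result rules out \emph{every} regular $S$-space, not merely those of a restricted class. Beyond these citation checks, the theorem is essentially a direct assembly of the ingredients set out in the paragraph preceding its statement, so I would not expect any genuinely new argument to be required.
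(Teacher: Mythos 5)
Your proposal is correct and follows exactly the paper's own argument: apply Theorem \ref{lindelof} in contrapositive form to extract a right-separated subspace of type $\omega_1$, invoke the dichotomy from \cite{Roi} that such a regular subspace is either an $S$-space or contains an uncountable discrete set, and work in a model (e.g.\ under PFA, via \cite{Tod}) where no regular $S$-spaces exist. No further comment is needed.
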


It is natural to then ask

\begin{question}
		Is it consistent with ZFC that any $T_2$ topological space $X \not\in \gbw$ contains a discrete subspace of size $\aleph_1$?
\end{question}

We don´t know if the number $\mathfrak{t}$ is optimal in Theorem \ref{lindelof}. Note that again discrete spaces (and their one-point compactifications) show that the number $\mathfrak{s}$ is the best we could expect (even for compact Hausdorff spaces), but we don´t know if we can in fact replace $\mathfrak{t}$ by $\mathfrak{s}$, so we ask

\begin{question}
	Is there a space $X \notin \gbw$ such that $hL(X)<\mathfrak{s}$?
\end{question}

Note that necessarily such a space would have to live in a model of $\mathfrak{t}<\mathfrak{s}$ so it cannot be a real example. On the other hand, it is not hard to see that the Fedor\v{c}uk´s space $X$ of Example \ref{fedorchuk} satisfies $hL(X)=\aleph_1=\mathfrak{t}$ (since $\diamondsuit \Rightarrow \mathrm{CH}$), but this doesn´t really show that the number $\mathfrak{t}$ in Theorem \ref{lindelof} is sharp since $X$ is not a ZFC example.

\begin{question}
	Is there, in \emph{ZFC}, a space $X \notin \gbw$ such that $hL(X)=\mathfrak{t}$?
\end{question}

We now give an example that shows that the converse of Theorem \ref{lindelof} is not true. This construction has been used before (see \cite{Haj}) and gives us a space $X \in \gbw$ such that $hL(X)=\mathfrak{s} \geq \mathfrak{t}$.

\begin{example}\label{right}
	Let	$X=\{x_\alpha :\alpha \in \mathfrak{s}\} \subseteq \mathbb{R}$ with the topology $\mathcal{T}$ generated by $\mathcal{T}_{us} \cup \{V_\alpha : \alpha \in \mathfrak{s}\}$, where $\mathcal{T}_{us}$ is the topology inherited by $X$ as a subspace of $\mathbb{R}$ and $V_\alpha=\{x_\xi : \xi \leq
	\alpha\}$. Then any sequence of subsets of $X$ has a convergent subsequence.
\end{example}
\begin{proof}
	First we prove that for any $K \subseteq X$ there is a $\delta \in \mathfrak{s}$ such that $K \cap V_\delta$ is dense in $K$. Since $\mathbb{R}$ is hereditarily separable, $K$ contains a countable set $\{x_{\alpha_n} : n \in \omega\}$ which is dense in $K$ with the topology inherited from $\mathcal{T}_{us}$. Let $\delta=\sup \{\alpha_n
	: n \in \omega\}$ and note that $\delta \in \mathfrak{s}$ because
	$\mathrm{cof}(\mathfrak{s})>\omega$ (see for instance \cite{Dow}). If $U \cap V_\alpha$ with $U \in \mathcal{T}_{us}$	is a basic open set that intersects $K$, then there is an $n \in	\omega$ with $x_{\alpha_n} \in K \cap U$. If $\alpha_n \leq \alpha$	then $x_{\alpha_n} \in (K \cap V_\delta) \cap (U \cap V_\alpha)$ and if $\alpha \leq \alpha_n$ then $(K \cap V_\delta) \cap (U \cap V_\alpha) = K \cap (U \cap V_\alpha) \neq \emptyset$. In either case $U \cap V_\alpha$ intersects $K \cap V_\delta$ and therefore $K \cap V_\delta$ is dense in $K$.
	
	Fix a sequence $(K_n)_{n \in \omega}$ of subsets of $X$. By the
	previous paragraph and the fact that replacing each $K_n$ by one
	of its dense subsets does not change the $\Ls$ or the $\Li$ of any
	subsequence of $(K_n)_{n \in \omega}$, we may assume that each $K_n$ is contained in $V_{\delta_n}$ for some $\delta_n \in \mathfrak{s}$. Let $\gamma=\sup \{\delta_n : n \in \omega\}$ and note that $\gamma \in \mathfrak{s}$ using again that $\mathrm{cof}(\mathfrak{s})>\omega$.
	
	Consider the topology $\mathcal{T}_\gamma$ in $X$ generated by $\mathcal{T}_{us} \cup \{V_\alpha : \alpha \in \gamma\}$. Given that $K_n=K_n \cap V_\alpha$ for any $\alpha \geq \gamma$, it is clear that for
	computing the $\Ls$ or the $\Li$ of any subsequence of $(K_n)_{n \in \omega}$ it does not make any difference if we use the topology $\mathcal{T}_\gamma$ or the topology $\mathcal{T}$. Since $\mathcal{T}_\gamma$ has weight less than $\mathfrak{s}$, Theorem \ref{booth} tells us that $(K_n)_{n \in \omega}$ has a convergent subsequence in $(X,\mathcal{T}_\gamma)$ and	therefore in $(X,\mathcal{T})$.
	
\end{proof}

We finish this section with a curious observation which is some sort of converse of Theorem \ref{lindelof}.

\begin{theorem}[Assume $\mathfrak{s}=\aleph_1$]
	If $X$ is a regular Hausdorff space in $\gbw$ then either $hL(X)<\mathfrak{t}$ or else $X$ contains an $S$-space.
\end{theorem}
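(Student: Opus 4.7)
The plan is to argue by contrapositive with respect to the second alternative: assume $X$ is regular Hausdorff, $X\in\gbw$, and $X$ does not contain an $S$-space, and deduce $hL(X)<\mathfrak{t}$. First I would reduce the hypothesis $\mathfrak{s}=\aleph_1$ to the convenient form $\mathfrak{t}=\aleph_1$, which follows at once from the standard inequalities $\aleph_1\leq\mathfrak{t}\leq\mathfrak{s}$. With that reduction, $hL(X)<\mathfrak{t}$ is the same as saying $X$ is hereditarily Lindel\"{o}f, so the goal becomes: if $X$ is regular Hausdorff, $X\in\gbw$, and $X$ is not hereditarily Lindel\"{o}f, then $X$ contains an $S$-space.

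Next I would extract the usual right-separated subspace. Since $X$ is not hereditarily Lindel\"{o}f, $hL(X)\geq\aleph_1$, so by the characterization recalled just before Theorem \ref{lindelof} there is a right-separated sequence $(x_\alpha)_{\alpha<\omega_1}$ in $X$. Let $Y=\{x_\alpha:\alpha<\omega_1\}$ carry the subspace topology; then $Y$ is a regular Hausdorff right-separated space of type $\omega_1$. At this point I invoke the well-known dichotomy for such spaces (the one cited from Roitman in the paragraph after Theorem \ref{lindelof}): either $Y$ is itself an $S$-space, or $Y$ contains an uncountable discrete subspace.

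In the first case we are done, since then $X$ contains an $S$-space. In the second case, let $D\subseteq Y$ be an uncountable discrete subspace of $X$, so $|D|\geq\aleph_1=\mathfrak{s}$. Theorem \ref{discrete} then produces a sequence of subsets of $X$ without a convergent subsequence, contradicting $X\in\gbw$. Thus only the first alternative can occur, completing the proof.

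There is no real obstacle here; the argument is essentially the contrapositive of the $T_3$ observation the author already made in the paragraph following Theorem \ref{lindelof}, with the role of ``it is consistent that there are no $S$-spaces'' replaced by the direct hypothesis that $X\in\gbw$ (which rules out the uncountable discrete subspace via Theorem \ref{discrete}). The only point that deserves a line of comment is the use of $\mathrm{cof}(\mathfrak{t})\geq\aleph_1$, which is trivial, and the implicit use of regularity in the statement of the $S$-space dichotomy.
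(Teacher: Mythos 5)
Your proposal is correct and follows essentially the same route as the paper: extract a right-separated subspace of type $\omega_1$ from $hL(X)\geq\mathfrak{t}=\aleph_1$, and use Theorem \ref{discrete} together with $\mathfrak{s}=\aleph_1$ to rule out an uncountable discrete subspace, leaving the $S$-space alternative. The paper phrases this directly rather than as a contrapositive, but the content is identical.
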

\begin{proof}
	If $hL(X)\geq \mathfrak{t}=\aleph_1$ then $X$ contains a right-separated subspace $Y$ of type $\omega_1$. Since $X \in \gbw$, by Theorem \ref{discrete}, $X$ (and therefore $Y$) contains no discrete subset of size $\mathfrak{s}$ (and hence no uncountable discrete subset) so $Y$ is an S-space.
\end{proof}

\section{Two discouraging examples}

The goal of this final section is to exhibit (assuming $\diamondsuit$) two spaces $X$ and $Y$ for which all of the seemingly relevant cardinal functions coincide and yet $X \in \gbw$ and $Y \notin \gbw$. Since we are assuming CH we have, at the same time, all cardinal invariants of the continuum collapsed into one. This seems to show, to some extent, that it is hopeless to try to characterize the generalized Bolzano-Weierstrass property for general topological spaces in a similar fashion as we did for metrizable spaces. Of course, there are many cardinal functions that we haven't explored, and there are other kind of restrictions on the spaces that one could also investigate (normality, compactness, homogeneity, etc.). 

Before looking at the examples, lets remember that, given a topological space $(Z,\mathcal{T})$ and an elementary submodel $M$ of (a large enough initial fragment of) the universe containing $(Z,\mathcal{T})$, one can consider another topological space $Z_M$ whose underlying set is $Z \cap M$ and whose topology has the collection $\mathcal{T}\!\upharpoonright\! M =\{U \cap M : U \in \mathcal{T}\cap M \}$ as a base (see \cite{Jun} for more details on this construction and many of its properties).

\begin{theorem}
	If the topological space $(Z,\mathcal{T})$ is not in $\gbw$ and $M$ is countably closed then $Z_M$ is not in $\gbw$.
\end{theorem}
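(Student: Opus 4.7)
The plan is to use elementarity to pull a bad sequence for $Z$ into $M$ and then show that its trace on $M$ witnesses $Z_M \notin \gbw$. Since $(Z,\mathcal{T})\in M$ and $Z \notin \gbw$, elementarity yields a sequence $(K_n)_{n \in \omega} \in M$ of subsets of $Z$ with no convergent subsequence in $Z$. I will show that the sequence defined by $L_n := K_n \cap M$ has no convergent subsequence in $Z_M$.

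Suppose, toward a contradiction, that $(L_n)_{n \in A}$ converges in $Z_M$ for some $A \in [\omega]^\omega$. Because $\omega \subseteq M$ and $M$ is countably closed, $A \in M$, and hence $K_n \in M$ for every $n \in A$. The subsequence $(K_n)_{n \in A}$ does not converge in $Z$, so there exist $q \in Z$ and an open $U \ni q$ such that $q \in \Ls (K_n)_{n \in A}$ while $U \cap K_n = \emptyset$ for infinitely many $n \in A$. All parameters of this existential assertion lie in $M$, so elementarity allows us to take such witnesses with $q \in Z \cap M$ and $U \in \mathcal{T}\cap M$.

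I then verify the corresponding non-convergence for $(L_n)_{n \in A}$ in $Z_M$. The set $U \cap M$ is a basic neighborhood of $q$ in $Z_M$ and $(U \cap M) \cap L_n = U \cap K_n \cap M = \emptyset$ for every $n$ with $U \cap K_n = \emptyset$, so $q \notin \Li (L_n)_{n \in A}$. For the other direction, fix any basic neighborhood $V \cap M$ of $q$ in $Z_M$ with $V \in \mathcal{T}\cap M$. Since $q \in \Ls (K_n)_{n \in A}$ in $Z$, the set $B = \{n \in A : V \cap K_n \neq \emptyset\}$ is infinite; for each $n \in B$ the set $V \cap K_n$ is a nonempty element of $M$, so by elementarity $V \cap K_n \cap M \neq \emptyset$, which gives $(V \cap M) \cap L_n \neq \emptyset$. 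Hence $q \in \Ls (L_n)_{n \in A}$ in $Z_M$, contradicting the assumed convergence.

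The only delicate step is the upper-limit transfer, which succeeds precisely because both $V$ and $K_n$ lie in $M$; countable closure is used only to ensure that $A$, and hence each $K_n$ with $n \in A$, belongs to $M$, so that elementarity can be applied to the statement ``$(K_n)_{n \in A}$ does not converge in $(Z,\mathcal{T})$.'' Everything else is a routine unwinding of the definitions of $\Ls$, $\Li$, and the base $\mathcal{T}\!\upharpoonright\! M$ on $Z_M$.
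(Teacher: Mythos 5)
Your proposal is correct and follows essentially the same route as the paper: pull a bad sequence $(K_n)_{n\in\omega}$ into $M$ by elementarity, use countable closure to get $A\in M$, reflect the witnesses $q$ and $U$ into $M$, and transfer non-emptiness of $V\cap K_n$ to $(V\cap M)\cap(K_n\cap M)$ via elementarity. The paper's proof is the same argument, stated directly for every $A$ rather than by contradiction.
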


\begin{proof}
	By elementarity we can fix $(K_n)_{n \in \omega} \in M$ a sequence of subsets of $Z$ without a convergent subsequence. We show that $(K_n\cap M)_{n \in \omega}$ has no convergent subsequence in $Z_M$. Given $A \in [\omega]^\omega$, since $A \in M$ (because $M$ is countably closed), by elementarity there is a $z \in Z \cap M$ such that $z \in \Ls (K_n)_{n\in A} \setminus \Li (K_n)_{n \in A}$. In particular (again by elementarity) there is a neighborhood $V \in \mathcal{T}\cap M$ of $z$ such that $V \cap K_n=\emptyset$ for infinitely many $n \in A$.
	
	Note that for any $U \in \mathcal{T} \cap M$ and $n \in \omega$ we have, using elementarity one more time, that $U \cap K_n \neq \emptyset \Leftrightarrow (U \cap M)\cap (K_n \cap M) \neq \emptyset$. This clearly implies that $z \in \Ls_{Z_M} (K_n\cap M)_{n \in A}$ and also, since $(V\cap M)\cap(K_n\cap M)=\emptyset$ for infinitely many $n$'s, that  $z \notin \Li_{Z_M} (K_n\cap M)_{n \in A}$. Therefore $(K_n \cap M)_{n \in A}$ does not converge in $Z_M$.
\end{proof}

Now we are ready for our examples. From now on assume $\diamondsuit$. 

The first space is the disjoint union of two spaces $X=X_1 \cup X_2$ where $X_1$ is the space of Example \ref{right}, just that now we are assuming $\mathfrak{s}=\aleph_1$, and $X_2$ is any countable, not first countable space. We saw that $X_1 \in \gbw$ and from Theorem \ref{lindelof} we have that $X_2 \in \gbw$ since $X_2$ is countable and hence hereditarily Lindel\"{o}f. It is straightforward to see that the disjoint union of two spaces in $\gbw$ is again in $\gbw$ so we get $X \in \gbw$.

For the second example we start with the Fedor\v{c}uk´s space of Example \ref{fedorchuk}, lets call it $(Z,\mathcal{T})$. Then we take a countably closed elementary submodel $M$ of size $2^{\aleph_0}=\aleph_1$ with $(Z,\mathcal{T}) \in M$. Finally we let $Y=Z_M$ as defined before. By the previous theorem, since $Z \notin \gbw$ and $M^\omega\subseteq M$, we have that $Y \notin \gbw$.

Clearly $|X|=|Y|=\aleph_1$. We know from Example \ref{right} that $hL(X) \geq \mathfrak{s}=\aleph_1$ and when proving that $X_1 \in \gbw$ we showed that any subset of $X_1$ has a dense subset of size less than $\mathfrak{s}$, so here we get that $X_1$ (and hence $X$) is hereditarily separable. Since $X_2$ is not first countable we have $\chi (X)=\aleph_1$. From $Y \notin \gbw$ and Theorem \ref{lindelof} we get $hL(Y) \geq \mathfrak{t}=\aleph_1$. Since $Z$ is hereditarily separable we have that $Y$ is also hereditarily separable (see \cite[Theorem 4.1]{Jun}). Also $\chi(Z)=\aleph_1$ because $Z$ has no non-trivial convergent sequences. Thus we have $$|X|=|Y|=w(X)=w(Y)=\chi(X)=\chi(Y)=hL(X)=hL(Y)=\aleph_1=\mathfrak{s}=\mathfrak{t}$$ and $$hd(X)=hd(Y)=s(X)=s(Y)=\aleph_0.$$

\bibliographystyle{plain}

\end{document}